\documentclass[12pt]{article}
\usepackage{amssymb,amsmath,amsthm}

\newtheorem{theorem}{Theorem}

\parindent1.5em
\parskip1.5ex plus0.5ex minus0.5ex
\textheight215mm
\frenchspacing

 \newcommand{\Q}{\ensuremath{\mathbb{Q}}}
 
 \newcommand{\C}{\ensuremath{\mathbb{C}}}

\newcommand{\ord}{{\rm ord}}

\pagestyle{myheadings}

\begin{document}
\title{\bf On Holomorphic Artin L-functions}
\author{FLORIN NICOLAE\\
 \\Simion Stoilow Institute of Mathematics\\ of the
Romanian Academy\\ P.O.BOX 1-764\\ 
RO-014700 Bucharest\\email:florin.nicolae@imar.ro}

\maketitle

\begin{abstract}
Let $K/\Q$ be a finite Galois extension, $s_0\in \C\setminus \{1\}$,
${\it Hol}(s_0)$ the semigroup of Artin L-functions holomorphic at $s_0$.
If the Galois group is almost monomial then Artin's L-functions are holomorphic
at $s_0$ if and only if $ {\it Hol}(s_0)$ is factorial. This holds also if $s_0$ is a zero of an irreducible 
L-function of dimension $\leq 2$, without any condition on the Galois group.

{\it Key words:} Artin L-function; Artin's holomorphy conjecture

MSC: 11R42
\end{abstract}

\indent Let $K/\Q$ be a finite Galois extension with the Galois group $G$,
$\chi_1,\ldots,\chi_r$ the irreducible characters of $G$ with the
dimensions $d_1:=\chi_1(1),\ldots,d_r:=\chi_r(1)$, $f_1=L(s, \chi_1,
K/\Q),\ldots,f_r=L(s,
\chi_r, K/\Q)$ the corresponding Artin L-functions,
$$Ar:=\{f_1^{k_1}\cdot\ldots\cdot f_r^{k_r}\mid k_1\geq 0,\ldots,k_r\geq 0\}$$
the multiplicative semigroup of all  L-functions. Artin proved that $f_1,\ldots,f_r$ are multiplicatively
independent (\cite{Ar}, Satz 5, P. 106 ), so $Ar$ is factorial with the set of primes $\{f_1,\ldots,f_r\}$.  For $s_0\in\C,s_0\neq 1$, let $ {\it Hol}(s_0)$ be the
subsemigroup of $Ar$ consisting of the L-functions which are holomorphic at
$s_0$. Artin conjectures that every  L-function is holomorphic at  $s_0$. This is true for monomial Galois groups. If $G$ is isomorphic to $A_5$, the alternating group on five elements,  it was proved in  (\cite{Ni2}, Theorem 3) that the  L-functions are holomorphic at $s_0$ if and only if $ {\it Hol}(s_0)$ is factorial. A finite group $G$ is called {\em almost monomial} if for every 
distinct irreducible characters $\chi$ and $\psi$ of  $G$ there exist a subgroup $H$ of $G$ and a linear character $\varphi$ of $H$ such that the induced character $\varphi^G$ contains  $\chi$ and does not contain  $\psi$. Every monomial group and every quasi monomial group in the sense of (\cite {Ni1}) are almost monomial. The group $A_5$ is almost monomial. 

\begin{theorem}
If the Galois group is almost monomial, then the following assertions are equivalent:\\
1) Artin's conjecture is true: $ {\it Hol}(s_0)=Ar.$\\
2) The semigroup $ {\it Hol}(s_0)$ is factorial.
\end{theorem}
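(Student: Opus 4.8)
The plan is as follows. The implication 1)$\Rightarrow$2) is immediate, since $Ar$ is factorial and ${\it Hol}(s_0)=Ar$. For 2)$\Rightarrow$1) I would assume that ${\it Hol}(s_0)$ is factorial and, arguing for a contradiction, that Artin's conjecture fails at $s_0$, and then exhibit an element of ${\it Hol}(s_0)$ with two genuinely different factorizations into irreducibles. Put $n_i:=\ord_{s_0}(f_i)\in\Z$, which is well defined since each $f_i$ is meromorphic on $\C$. Because $\ord_{s_0}(f_1^{k_1}\cdots f_r^{k_r})=\sum_i k_i n_i$, an element of $Ar$ lies in ${\it Hol}(s_0)$ precisely when $\sum_i k_i n_i\geq 0$, and Artin's conjecture at $s_0$ is exactly the assertion $n_i\geq 0$ for all $i$; so the failure of 1) means $n_b<0$ for some $b$.

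The essential step — and the place where almost monomiality does the real work — is to deduce from $n_b<0$ that \emph{two} indices have $n_i>0$. First, $\zeta_K=f_1^{d_1}\cdots f_r^{d_r}$ is holomorphic at $s_0\neq 1$, so $\sum_i d_i n_i\geq 0$, which with $n_b<0$ already yields one index $a$ with $n_a>0$. Next, apply the almost monomial hypothesis to the ordered pair $(\chi_b,\chi_a)$: there are a subgroup $H\leq G$ and a linear character $\varphi$ of $H$ with $\langle\varphi^G,\chi_b\rangle\geq 1$ and $\langle\varphi^G,\chi_a\rangle=0$. Since $\varphi$ is linear, $L(s,\varphi^G,K/\Q)=L(s,\varphi,K/K^H)$ is an abelian $L$-function, hence holomorphic at $s_0\neq 1$; writing $L(s,\varphi^G,K/\Q)=\prod_l f_l^{\langle\varphi^G,\chi_l\rangle}$ and taking $\ord_{s_0}$ gives $\sum_l\langle\varphi^G,\chi_l\rangle n_l\geq 0$. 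Discarding the vanishing $\chi_a$-term and transposing the negative $\chi_b$-term, one gets $\sum_{l\neq a,b}\langle\varphi^G,\chi_l\rangle n_l\geq-\langle\varphi^G,\chi_b\rangle n_b>0$, so some $c\notin\{a,b\}$ has $n_c>0$. Thus $a\neq c$ with $n_a,n_c>0$. I expect this to be the crux: it uses exactly the combination of almost monomiality with Hecke's theorem on abelian $L$-functions, and nothing stronger — the stronger input ``some positive multiple of $\chi_i$ is a non-negative combination of monomial characters'' would make Artin's conjecture trivial and is \emph{not} available here.

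Given $a$, $c$ and $b$, the rest is a bookkeeping construction. Set $N:=-n_b\geq 1$, $m_1:=\lceil N/n_a\rceil\geq 1$, $m_2:=\lceil N/n_c\rceil\geq 1$, and let $g_1:=f_a^{m_1}f_b$, $g_2:=f_c^{m_2}f_b$. Then $\ord_{s_0}(g_1)=m_1 n_a-N\geq 0$ and $\ord_{s_0}(g_2)=m_2 n_c-N\geq 0$, so $g_1,g_2\in{\it Hol}(s_0)$, and I would check both are irreducible there: in any factorization the single copy of $f_b$ goes into one of the two factors, which then carries fewer than $m_1$ copies of $f_a$ (resp.\ fewer than $m_2$ of $f_c$), making its order at $s_0$ negative by the choice of $m_1$ (resp.\ $m_2$). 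Also $f_a,f_c\in{\it Hol}(s_0)$ since $n_a,n_c>0$, and they are irreducible in ${\it Hol}(s_0)$, being already prime in $Ar$. Now the identity
\begin{equation*}
g_1\cdot f_c^{m_2}\;=\;f_a^{m_1}f_c^{m_2}f_b\;=\;f_a^{m_1}\cdot g_2
\end{equation*}
displays one element of ${\it Hol}(s_0)$ as a product of irreducibles in two ways. These are distinct: $g_1$ occurs on the left, whereas on the right each factor is $f_a$ or $g_2$, and $g_1$ is neither — its support contains the index $b$, unlike $f_a^{m_1}$, and $g_1\neq g_2$ because $a\neq c$. This contradicts the factoriality of ${\it Hol}(s_0)$, so no $n_b$ is negative; hence every $f_i$ lies in ${\it Hol}(s_0)$ and ${\it Hol}(s_0)=Ar$, which is 1).
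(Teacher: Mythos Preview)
Your proof is correct. Both you and the paper open identically: find one index $a$ with $n_a>0$ via $\zeta_K$, then apply almost monomiality to the pair $(\chi_b,\chi_a)$ (bad character contained, good character excluded) and invoke holomorphy of the resulting abelian $L$-function. After that the arguments diverge. The paper cites the affine-semigroup structure from \cite{Ni2}: it knows ${\it Hol}(s_0)$ has a unique Hilbert basis $\mathcal H$ and is factorial iff $|\mathcal H|=r$; having produced $r$ irreducibles $f_l^{m_j}f_j$ it concludes $\mathcal H=\{f_l^{m_j}f_j\}$, and then expressing $L(s,\psi^G)$ in this basis forces an $f_l$-factor to appear via $f_l^{m_k}f_k$, contradicting $\langle\psi^G,\chi_l\rangle=0$. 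You instead read off from the same inequality $\sum_l\langle\varphi^G,\chi_l\rangle n_l\ge 0$ a \emph{second} good index $c\neq a$ with $n_c>0$, and then exhibit two distinct irreducible factorizations of $f_a^{m_1}f_c^{m_2}f_b$ directly. Your route is more elementary and self-contained---it needs no Hilbert-basis input from \cite{Ni2}---while the paper's route yields the extra structural information that the Hilbert basis is exactly $\{f_l^{m_j}f_j\}$, which it then reuses in Theorem~2.
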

\begin{proof}  

$1)\Rightarrow 2)$: If the L-functions are holomorphic at $s_0$,
then $ {\it Hol}(s_0)=Ar$ is factorial.\\
$2)\Rightarrow 1)$: Suppose that Artin's conjecture is not true. Then there exists $1\leq k\leq r$ such that 
\begin{equation}
\ord(f_k)<0,
\end{equation}
where $\ord(f)$ denotes the order of the Artin L-function $f$ at $s_0$.
The Dedekind zeta function $\zeta_K$ of $K$ decomposes as
\begin{equation}\zeta_K=f_1^{d_1}\cdot\ldots\cdot f_r^{d_r}.\end{equation}
Since $\zeta_K $ is holomorphic in ${\mathbb C}\setminus \{1\} $ it holds that 
\begin{equation}
\ord(\zeta_K)\geq 0.
\end{equation}
From $(1)$, $(2)$ and $(3)$ it follows that 
there exists $l\in\{1,\ldots,r\}$ such that
$$\ord(f_l)>0.$$
For $j\in\{1,\ldots,r\}$ let 
\[
m_j:=\min\{ m\geq 0:\ord(f_l^m\cdot f_j)\geq 0\}.
\]
Since the L-functions $f_1,\ldots,f_r$ are multiplicatively independent the elements $f_l^{m_1}\cdot f_1,\ldots,f_l^{m_r}\cdot f_r$ are irreducible in $ {\it Hol}(s_0)$.  We have seen in \cite{Ni2}, p. 2862, that $ {\it Hol}(s_0)$ is a positive
affine semigroup which generates the free abelian group $\{f_1^{k_1}\cdot\ldots\cdot f_r^{k_r}\mid k_1\in {\mathbb Z} 0,\ldots,k_r\in {\mathbb Z} \}$ with the basis
$f_1,\ldots,f_r$. The Hilbert basis $\mathcal H$ of  $ {\it Hol}(s_0)$ is the uniquely
determined minimal system of generators of $ {\it Hol}(s_0)$, hence 
$ {\it Hol}(s_0)$ is factorial if and only if $\mathcal H$ has $r$ elements. It follows that 
$$ \mathcal H=\{f_l^{m_1}\cdot f_1,\ldots,f_l^{m_r}\cdot f_r \}.$$
From $(1)$ it follows that $m_k>0$. Since the Galois group $G$ is almost monomial there exist a subgroup $H$ of $G$ and a linear character $\psi$ of $H$ such that the induced character $\psi^G$ contains  $\chi_k$ and does not contain  $\chi_l$. By classfield theory the Artin L-function 
$L(s, \psi^G,K/\Q)$ is a Hecke L-function so it is holomorphic at $s_0$. Then $L(s, \psi^G,K/\Q)$ is a product of elements of  $\mathcal H$. Since $\psi^G$ contains  $\chi_k$ the Artin L-function  
$L(s, \psi^G,K/\Q)$ contains $L(s, \chi_k,K/\Q)=f_k$ so it contains $f_l^{m_k}\cdot f_k$ and so $f_l$ since $m_k>0$. On the other hand, since  $\psi^G$ does not contain $\chi_l$ the Artin L-function  
$L(s, \psi^G,K/\Q)$ does not contain $L(s, \chi_l,K/\Q)=f_l$, a contradiction.

\end{proof}

We don't know whether any finite group $G$ is almost monomial. We think that theorem 1 is true without no condition on the Galois group. We can prove only a partial result:

\begin{theorem}
 If $s_0$ is a zero of some $f_l$ with $d_l\leq 2$ then the following assertions are equivalent:\\
1) Artin's conjecture is true: $ {\it Hol}(s_0)=Ar.$\\
2) The semigroup $ {\it Hol}(s_0)$ is factorial.
\end{theorem}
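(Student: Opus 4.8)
The plan is to mimic the proof of Theorem 1, the whole extra content being a small character-theoretic lemma that replaces the ``almost monomial'' hypothesis. The implication $1)\Rightarrow 2)$ is immediate, since $Ar$, hence $ {\it Hol}(s_0)=Ar$, is factorial. For $2)\Rightarrow 1)$ I would argue by contradiction: assume $ {\it Hol}(s_0)$ is factorial but Artin's conjecture fails at $s_0$. By hypothesis $s_0$ is a zero of some $f_l$ with $d_l\leq 2$, so $\ord(f_l)>0$; fix this particular $l$. The part of the proof of Theorem 1 that produces the Hilbert basis uses only that $\ord(f_l)>0$, so running it with this $l$ gives: an index $k$ with $\ord(f_k)<0$; the elements $f_l^{m_1}f_1,\ldots,f_l^{m_r}f_r$, with $m_j:=\min\{m\geq 0:\ord(f_l^m f_j)\geq 0\}$, irreducible in $ {\it Hol}(s_0)$; the Hilbert basis $\mathcal H=\{f_l^{m_1}f_1,\ldots,f_l^{m_r}f_r\}$ by factoriality; and $m_k>0$. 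Nothing about the Galois group has been used; in Theorem 1 the hypothesis entered only to separate $\chi_k$ from $\chi_l$ by a monomial character. So everything reduces to the purely group-theoretic claim $(\ast)$: \emph{if $\psi$ is an irreducible character of a finite group $G$ with $\psi(1)\leq 2$ and $\chi\neq\psi$ is any irreducible character, then there are a subgroup $H$ and a linear character $\varphi$ of $H$ with $\langle\varphi^G,\chi\rangle\geq 1$ and $\langle\varphi^G,\psi\rangle=0$.}

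Granting $(\ast)$, I would finish exactly as in Theorem 1. Apply $(\ast)$ with $\psi=\chi_l$ (allowed since $d_l\leq 2$) and $\chi=\chi_k$, getting $H$ and a linear character $\varphi$ of $H$ such that $\varphi^G$ contains $\chi_k$ and does not contain $\chi_l$. By class field theory $L(s,\varphi^G,K/\Q)$ is a Hecke L-function (the L-function of a Hecke character of $K^H$, or $\zeta_{K^H}$ if $\varphi$ is trivial), hence holomorphic at $s_0\neq 1$, so it lies in $ {\it Hol}(s_0)$ and is a product $\prod_j(f_l^{m_j}f_j)^{c_j}$ with $c_j\geq 0$. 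Comparing exponents of $f_j$ for $j\neq l$ gives $c_j=\langle\varphi^G,\chi_j\rangle$, so $c_k\geq 1$; comparing the exponent of $f_l$ gives $c_l+\sum_{j\neq l}m_j c_j=\langle\varphi^G,\chi_l\rangle=0$, so every summand vanishes, $m_k c_k=0$, and hence $c_k=0$ since $m_k>0$ — the same contradiction as in the proof of Theorem 1.

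For $(\ast)$ the plan is as follows. By Frobenius reciprocity $\langle\varphi^G,\theta\rangle=\langle\varphi,\theta|_H\rangle$, so it suffices to find a cyclic $H=\langle g\rangle$ and a linear character $\varphi$ of $H$ occurring in $\chi|_H$ but not in $\psi|_H$; since for cyclic $H=\langle g\rangle$ the linear constituents of $\theta|_H$ are precisely the eigenvalues of $\theta$ at $g$ counted with multiplicity, this means an eigenvalue of $\chi$ at some $g$ that is not an eigenvalue of $\psi$ at $g$. Suppose no such $g$ exists, i.e. ${\rm spec}\,\rho_\chi(g)\subseteq{\rm spec}\,\rho_\psi(g)$ for all $g$, for fixed matrix representations $\rho_\chi,\rho_\psi$ affording $\chi,\psi$. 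If $\psi(1)=1$ this says $\rho_\chi(g)=\psi(g)I$ for all $g$, so $\chi=\chi(1)\psi$ as class functions, forcing $\chi=\psi$ — contradiction. If $\psi(1)=2$, write ${\rm spec}\,\rho_\psi(g)=\{a_g,b_g\}$ and $\chi(g)=p_g a_g+q_g b_g$ with integers $p_g,q_g\geq 0$, $p_g+q_g=\chi(1)=:d$ (the eigenvalue multiplicities of $\rho_\chi(g)$, a repeated eigenvalue being assigned either label). Since $|a_g|=|b_g|=1$,
\[
\chi(g)\overline{\psi(g)}=(p_g a_g+q_g b_g)(\overline{a_g}+\overline{b_g})=d+p_g\omega_g+q_g\overline{\omega_g},\qquad\omega_g:=a_g\overline{b_g}.
\]
Averaging over $g$ and using $\langle\chi,\psi\rangle=0$ gives $\frac{1}{|G|}\sum_g(p_g\omega_g+q_g\overline{\omega_g})=-d$; each summand has modulus at most $p_g+q_g=d$, so each equals $-d$, which forces $\omega_g=-1$ for every $g$. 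But then $\psi(g)=a_g+b_g=a_g(1+\overline{\omega_g})=0$ for all $g$, contradicting $\psi(1)=2$. This proves $(\ast)$, and with it the theorem.

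I expect the only real obstacle to be $(\ast)$, and within it the case $\psi(1)=2$; the reduction of the theorem to $(\ast)$ is a line-by-line transcription of the proof of Theorem 1. The place where the hypothesis $d_l\leq 2$ is genuinely used is the averaging step above, which hinges on $\psi$ having at most two eigenvalues at each group element; an analogue of $(\ast)$ for larger $\psi(1)$ is precisely what a proof of Theorem 1 without the ``almost monomial'' hypothesis would need.
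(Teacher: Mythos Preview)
Your proof is correct and takes a genuinely different route from the paper's. The paper does \emph{not} reduce to an ``almost monomial relative to $\chi_l$'' statement like your $(\ast)$. Instead, after obtaining the Hilbert basis $\mathcal H=\{f_l^{m_j}f_j\}$, it first shows that $l$ is the \emph{unique} index with $\ord(f_l)>0$ (by comparing two Hilbert-basis descriptions), then expands $\zeta_K$ in $\mathcal H$ to get the numerical relation $d_l=a_l+\sum_{j\neq l}m_j d_j$; since any $j$ with $\ord(f_j)<0$ has $d_j\geq 2$, the bound $d_l\leq 2$ forces a single bad index $k$ with $d_k=2$, $m_k=1$, $d_l=2$. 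The paper then invokes Rhoades' strengthening of Aramata--Brauer to the effect that $\zeta_K\cdot f_k$ is holomorphic, and expanding this in $\mathcal H$ yields $d_l\geq d_k+1=3$, a contradiction. Your argument is more elementary and self-contained: it avoids the external input from Rhoades entirely, replacing it by the averaging lemma $(\ast)$, and it stays closer to the template of Theorem~1 by exhibiting directly a monomial character separating $\chi_k$ from $\chi_l$. Conceptually your $(\ast)$ says that every finite group is ``almost monomial away from any fixed irreducible of degree $\leq 2$'', which is a pleasant statement in its own right; the paper's approach, by contrast, extracts more structural information (uniqueness of $l$, the exact values $d_k=d_l=2$, $m_k=1$) before reaching a contradiction, at the cost of needing Rhoades' theorem.
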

\begin{proof}  

$1)\Rightarrow 2)$ is clear.\\
$2)\Rightarrow 1)$: Suppose that Artin's conjecture is not true.  For $j\in\{1,\ldots,r\}$ let 
\[
m_j:=\min\{ m\geq 0:\ord(f_l^m\cdot f_j)\geq 0\}.
\]
As in the proof of theorem 1 we have that the Hilbert basis of  $ {\it Hol}(s_0)$ is 
\[ \mathcal H=\{f_l^{m_1}\cdot f_1,\ldots,f_l^{m_r}\cdot f_r \}.\]
Since $\zeta_K\in  {\it Hol}(s_0)$ there exist $a_1\geq 0,\ldots,a_r\geq 0$ such that 
$$\zeta_K=\prod_{j=1}^r\left(f_l^{m_j}\cdot f_j\right)^{a_j}, $$
so 
$$f_1^{d_1}\cdot\ldots\cdot f_r^{d_r}=\prod_{j=1}^r\left(f_l^{m_j}\cdot f_j\right)^{a_j}, $$
$$d_l=a_l+\sum_{j=1,j\neq l}^rm_j a_j, $$
$$d_j=a_j,j\neq l $$
since $f_1,\ldots,f_r$ are multiplicatively
independent and $m_l=0$. Hence
\begin{equation}d_l=a_l+\sum_{j=1,j\neq l}^r m_jd_j.\end{equation}
Suppose that there exists $k\neq l$ such that $$\ord(f_k)>0.$$
For $j\in\{1,\ldots,r\}$ let
\[
n_j:=\min\{ m\geq 0:\ord(f_k^m\cdot f_j)\geq 0\}.
\]
It follows that
\[ \mathcal H=\{f_k^{n_1}\cdot f_1,\ldots,f_k^{n_r}\cdot f_r\}\]
hence
$$\{f_l^{m_1}\cdot f_1,\ldots,f_l^{m_r}\cdot f_r \}=\{f_k^{n_1}\cdot f_1,\ldots,f_k^{n_r}\cdot f_r\},$$
$$f_l^{m_j}\cdot f_j=f_k^{n_j}\cdot f_j$$ for $j\neq k,l$,  
$$f_l^{m_j}=f_k^{n_j}, $$
$$m_j=n_j=0, $$
$$\mathcal H=\{f_j:j\neq k,l \}\cup \{f_l^{m_k}\cdot f_k,f_l\}=\{f_j:j\neq k,l \}\cup \{f_k^{n_l}\cdot f_l,f_k\}, $$
$$f_k=f_l^{m_k}\cdot f_k, $$
$$f_l^{m_k}=1, $$
$$m_k=0, $$
$$n_l=0, $$
$$\mathcal H=\{f_1,\ldots,f_r\}. $$
This means that $f_1,\ldots,f_r$ are holomorphic at $s_0$, so Artin's conjecture is true, a contradiction. It follows that
\begin{equation} \ord(f_k)\leq 0\, 
\text{for every}\, k\neq l.\end{equation} Since we have supposed that Artin's conjecture is not true there exists $k\neq l$ such that
$$\ord(f_k)<0.$$
Since $m_j=0$ if $\ord(f_j)=0$ from $(4)$ and $(5)$ it follows that 
\[d_l=a_l+\sum_{j:\ord(f_j)<0}m_jd_j,\]
hence
\begin{equation} d_l\geq \sum_{j:\ord(f_j)<0}m_jd_j,\end{equation}
since $a_l\geq 0$. For any $j$ with $\ord(f_j)<0$ we have that $d_j\geq 2$, since if $d_j=1$ then by classfield theory the L-function $f_j$ is a Hecke L-function so it is holomorphic at $s_0$. Since $d_l\leq 2$ it follows from $(6)$ that there exists 
only one $k$ such that $\ord(f_k)<0$. This implies  $d_k=2$, $m_k=1$, $d_l=2$, $\mathcal H=\{f_j:j\neq k \}\cup \{f_l\cdot f_k\}$. By a result of Rhoades improving the theorem of Aramata-Brauer ( \cite {Rho}, Theorem 2, p. 359) the function $\zeta_K\cdot f_k$ is holomorphic in ${\mathbb C}\setminus \{1\}$, so 
$\zeta_K\cdot f_k\in   {\it Hol}(s_0)$. Then  $\zeta_K\cdot f_k$ is a product of elements of  $\mathcal H$:  there exist $b_1,\ldots,b_r\geq 0$ such that
\[\zeta_K\cdot f_k=(\prod_{j\neq k}f_j^{b_j})\cdot (f_l\cdot f_k)^{b_k},\]
\[(\prod_{j\neq k}f_j^{d_j})\cdot f_k^{d_k+1}=(\prod_{j\neq k}f_j^{b_j})\cdot(f_l\cdot f_k)^{b_k},\]

\[b_k=d_k+1,d_l=b_l+b_k,\]
\[d_l\geq d_k+1=3,\]
in contradiction with $d_l=2$.

\end{proof}

\newpage

\end{document}